\definecolor{mblue}{rgb}{0,0,.8}
\newcommand{\N}{\mathbb N}
\newcommand{\Z}{\mathbb Z}
\newcommand{\Q}{\mathbb Q}
\newcommand{\R}{\mathbb R}
\newcommand{\C}{\mathbb C}
\newcommand{\Zmodpm}[1]{\Z/{#1}\Z}
\renewcommand{\a}{\alpha}
\renewcommand{\b}{\beta}
\renewcommand{\d}{\delta}
\renewcommand{\k}{\kappa}
\newcommand{\B}{\mathcal{B}}
\newcommand\Es[0]{E^\ast}
\newtheorem{thm}{Theorem}
\newtheorem{con}{Conjecture}
\newtheorem{lem}[thm]{Lemma}
\newtheorem{remark}[thm]{Remark}
\newtheorem{prop}[thm]{Proposition}
\newtheorem{cor}[thm]{Corollary}
\newtheorem*{con-non}{Conjecture}
\newtheorem{alg}{Algorithm}
 \DeclareMathOperator{\SL}{SL}
\begin{document}

\title[Computations on Overconvergence Rates Related to the Eisenstein Family]{Computations on Overconvergence Rates Related to the Eisenstein Family}
\author{Bryan \ Advocaat}
\address[Bryan \ Advocaat]{Department of Mathematics, University of Luxembourg, Maison du nombre, 6~avenue de la Fonte, L-4364 Esch-sur-Alzette, Luxembourg, and Department of Mathematical Sciences, University of Copenhagen, Universitetsparken 5, DK-2100 Copenhagen \O ,
Denmark.}
\email{bryan.advocaat@uni.lu}

\bibliographystyle{plain}

\begin{abstract} We provide for primes $p\ge 5$ a method to compute valuations appearing in the "formal" Katz expansion of the family $\frac{\Es_\k}{V(\Es_\k)}$ derived from the family of Eisenstein series $\Es_\k$. The overconvergence rates of the members of this family go back to a conjecture from Coleman. We will describe two algorithms: the first one to compute the Katz expansion of an overconvergent modular form and the second one, which uses the first algorithm, to compute valuations appearing in the "formal" Katz expansion. Based on data obtained using these algorithms we make a precise conjecture about a constant appearing in the overconvergence rates related to the classical Eisenstein series at level $p$. 
\end{abstract}

\maketitle

\section*{Introduction}
In this paper we will obtain computational data regarding the overconvergence rate related to the Eisenstein Family. We will formulate two algorithms; the first one is to compute the Katz expansion (see below for its definition) of an overconvergent modular form, and the second algorithm is able to compute valuations appearing in the "formal Katz expansion" of a family related to the classical Eisenstein series of level $p$. Based on data obtained from the second algorithm, we formulate a conjecture. To state our conjecture precisely, we will start by introducing the necessary terminology. Throughout, $p$ will denote a prime $\geq 5$, and $\nu_{p}$ (or simply $\nu$ if there is no confusion about the prime) will denote the $p$-adic valuation of $\C_{p}$, normalized such that $\nu_{p}(p) = 1$.  We let $\mathcal{W}$, called the weight space, be the group $\text{Hom}_{\text{cont}}(\Z_{p}^{\times},\C_{p}^{\times} )$, i.e. the continuous characters of $\Z_{p}^{\times}$ with values in $\C_{p}$. We denote by $\mathcal{B}$ the subspace of these characters which are trivial when restricted to the $(p-1)$st roots of unity. If we denote by $\mathcal{D}$ the open disk of radius $1$ around the origin in $\C_{p}$ then we can identify $\mathcal{W}$ with $\mathcal{D}$ by sending an element $\kappa \in \mathcal{W}$ to the element $w_{\kappa} := \kappa(p+1)-1 \in \mathcal{D}$. A positive integer $k$ corresponds to the weight given by the character $x \mapsto x^{k}$, and it is precisely a weight in $\mathcal{B}$ if $k$ is divisible by $p-1$. We will denote classical weights just by an integer $k$ instead of by their corresponding character. \\
Then, for a weight $\kappa \in \mathcal{B} \backslash \{ 1 \}$ we have a family interpolating the classical Eisenstein series whose $q$-expansions are given by
\[
\Es_{\kappa}(q) = 1 + \frac{2}{\zeta^{\ast}(\kappa)} \sum_{n=1}^{\infty} \left( \sum_{\substack{ d\mid n \\ p\nmid d}} \kappa(d) d^{-1} \right) \cdot q^n,
\]
where $\zeta^{\ast}$ is the $p$-adic zeta function on $\mathcal{W}$. It has been observed (see for example \cite[Section 5]{BuzCal}) that there exists a power series, $E/VE \in \Z_{p}[[q,w]] $ such that if we are given a weight $\kappa \in \mathcal{B}\backslash \{ 1 \}$, then we have that $(E/VE)(w_\kappa) = \Es_{\kappa}/V(\Es_{\kappa})$, where $V$ is the $p$-adic Frobenius operator, acting on $q$-expansions as $q \mapsto q^{p}$. \\

Our main interest will then be to deduce information about the overconvergence rate of $\Es_{\kappa}/V(\Es_{\kappa})$. To describe the overcovergence rate we use the notion of a Katz expansion. We will give a short description of it, for more details see Section \ref{theory}. Katz showed that for each $i \in \N_{\geq 0}$ there is a splitting 
\begin{equation}\label{splitting}
M_{i(p-1)} (\Z_{p}) = E_{p-1} M_{(i-1)(p-1)}(\Z_{p}) \oplus B_{i}(\Z_{p}),
\end{equation}
where $E_{p-1}$ is the Eisenstein series of weight $p-1$ and level $1$, normalized such that its constant coefficient is $1$ (see \cite{K}). Such a splitting is not unique, but once it has been chosen, Katz has shown that an overconvergent modular form of weight $0$ can be written uniquely as $f = \sum_{i=0}^{\infty} \frac{b_{i}}{E_{p-1}^{i}}$, where $b_{i} \in B_{i}(\Z_{p})$, which is called its Katz expansion, and the values $\nu_{p}(b_{i}) := \text{inf}_{n}(\nu_{p}(a_{n}(b_{i})))$ can be used to measure the overconvergence rate of $f$. From now on, we will fix a splitting described by Lauder (see \cite{L}), which is particularly easy to compute with. For the explicit description of this see Section \ref{theory}. It is shown in \cite{OP} that there exist modular forms $b_{i,j} \in \Z_{p}[[q]]$ for all $i,j \geq 0$  such that if we define $\beta_{i} (w) := \sum_{j=0}^{\infty} b_{ij}w^{j}$ and if $\kappa \in \mathcal{B} \backslash \{ 1 \}$, then the Katz expansion of $\Es_{\kappa} / V(\Es_{\kappa})$ is given by $\sum_{i=0}^{\infty} \frac{\b_i(w(\k))}{E_{p-1}^{i}}$.  In \cite[Section 3.3]{OP}, it is proven that $\nu_{p}(b_{ij}) \geq c_{p}i-j$, where the $c_{p}$ is an explicit constant depending only on $p$. This can be used to give explicit overconvergence rates for $\Es_{\kappa}/V(E_{\kappa})$ for weights $\kappa \in \mathcal{B} \backslash \{ 1 \}$. This $c_{p}$, however, does not seem to be optimal, in the following sense. Denote by $\delta_{p}$ the following quantity
\[
\delta_{p} := \text{inf} \left \{ \frac{ \nu_{p}(b_{ij}) + j}{i} \middle| i \in \Z_{>0}, j \in \Z_{\geq 0} \right \}.
\]
So in particular, it is known by \cite[Theorem B]{OP}, that $\delta_{p} \geq c_{p}$. The main purpose of this paper will then be to compute approximations of the constant $\delta_{p}$ for different primes $p$. In particular, we conjecture the following.
\begin{con}\label{conjecture}
Let the $b_{i,j}$ be as above, then we have that $\nu(b_{i,j}) \geq d_{p}i-j$, for all $i,j \geq 0$, where
\[
d_{p} = \frac{p-1}{p(p+1)}.
\]
\end{con}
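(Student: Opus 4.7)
The plan is to revisit the argument of \cite[Theorem B]{OP} that gives the weaker bound $c_p$ and to track valuations more carefully, exploiting specific structure of the Eisenstein family rather than generic estimates on division by $E_{p-1}$.

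First, I would obtain a usable power-series description of $E/VE \in \Z_p[[q,w]]$. Via the explicit $q$-expansion of $\Es_\k$ and the parametrization of $\B$ by $w = \k(1+p)-1$, each $q$-coefficient of $E/VE$ becomes a power series in $w$ whose valuation I would estimate term by term, using known bounds on the $p$-adic zeta function $\zeta^\ast(\k)$ (in particular the location of its trivial zero) and on the factors $\k(d)d^{-1}-1$ expanded in $w$.

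Second, I would set up the Katz decomposition recursively. Lauder's splitting is designed to be computable step by step: one divides by $E_{p-1}$, projects onto $B_i$, and iterates. This gives a recursion of the form $\b_{i+1}(w) = \pi_{B_{i+1}}\bigl( (F_i - \b_i(w))\cdot E_{p-1}^{-1}\bigr)$ applied to the universal Eisenstein form $F_0 = E/VE$, and hence an inductive strategy for bounding $\nu(b_{ij})$ in terms of bounds at level $i$. This matches, and in fact underlies, the first algorithm described in the introduction, so the same machinery already built for computation can be repurposed for proof.

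Third, the key analytic task is to show that each iteration of ``divide by $E_{p-1}$ and project onto the complement'' gains exactly a valuation increment consistent with slope $d_p$ when applied to the Eisenstein family. The bound $c_p$ of \cite{OP} comes from an argument that is uniform over a class of forms strictly containing this family; improving it to $d_p$ will require exploiting fine congruences satisfied by the coefficients of $\Es_\k$, most naturally through their relation to the Kubota--Leopoldt $p$-adic $L$-function and to $\Lambda$-adic Eisenstein theory.

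The main obstacle is precisely this last step. The factorization $d_p = \frac{p-1}{p}\cdot\frac{1}{p+1}$ suggests that the sharp constant is a product of two contributions: a factor $1/(p+1)$ reflecting the Coleman--Katz overconvergence radius attached to the $E_{p-1}$-expansion, and a factor $(p-1)/p$ reflecting Iwasawa-theoretic cancellation inside $1/\zeta^\ast(\k)$. Making this heuristic rigorous --- in particular showing that the two contributions multiply rather than add through the recursion --- appears out of reach of the uniform methods of \cite{OP}, which is why the present paper supports the conjecture only computationally. A reasonable intermediate theoretical target would be to verify the inequality on the ``diagonal'' $j=0$, where $b_{i,0}$ is the Katz expansion of the classical level-$p$ Eisenstein series itself and the recursion may become tractable in closed form; this would at least confirm the constant $d_p$ at $w=0$ and isolate where the nontrivial $w$-dependence must enter.
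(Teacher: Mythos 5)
The statement is labelled \emph{Conjecture} in the paper for a reason: the paper contains no proof of it and offers only computational evidence. The constant $d_p = \frac{p-1}{p(p+1)}$ was extracted from running Algorithm~\ref{alg2} for $p\in\{5,7,11,13,17,37\}$ and observing that the quantities $\frac{\nu_p(b_{ij})+j}{i}$ appear to be bounded below by $d_p$, with equality attained at explicit small $(i,j)$ for $p\le 13$. Your write-up is explicitly a plan, not a proof, and you correctly identify the crux --- sharpening the uniform division-by-$E_{p-1}$ estimates of \cite{OP} by exploiting Iwasawa-theoretic structure specific to the Eisenstein family --- as the point where the argument is out of reach. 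On the central matter you and the paper agree: neither proves the bound, and the paper was never claiming to.

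There is, however, one concrete error in the plan worth flagging. You propose the $j=0$ ``diagonal'' as a tractable first target, describing $b_{i,0}$ as ``the Katz expansion of the classical level-$p$ Eisenstein series itself'' and hoping to confirm the constant $d_p$ there. But $w=0$ corresponds to the trivial weight $\kappa=1$, which is excluded from $\mathcal{B}\setminus\{1\}$ precisely because $\zeta^\ast$ has its pole there, and Proposition~\ref{jzero} of the paper proves $b_{0,0}=1$ and $b_{i,0}=0$ for all $i>0$. Thus $\nu_p(b_{i,0})\ge d_p i$ holds vacuously for every $i>0$ (infinite valuation on the left) and gives no information whatsoever about $d_p$; the $j=0$ stratum is degenerate rather than a closed-form testing ground. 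Any genuine test of the constant must involve $j\ge 1$, and the computations show the first sharp cases occur only at fairly large $i$ (e.g.\ $i=30$ for $p=5$, $i=132$ for $p=11$), which is exactly why the paper resorts to the algorithms rather than a hand analysis. If you want a theoretically grounded benchmark, the better anchor in the paper is the remark that for $p\in\{5,7\}$ and classical $k$ divisible by $p-1$ one has $\Es_k/V(\Es_k)\in M_0(\Z_p,\ge d_p)$ by \cite[Prop.~4.2]{OP}; but, as the paper also cautions, it is unclear whether information from classical weights alone can be bootstrapped into the full two-variable bound on $\nu_p(b_{ij})$.
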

Hence we conjecture that $\delta_{p} \geq \frac{p-1}{p(p+1)}$. Note that we do not conjecture that equality holds, but computations for low primes do give explicit values $i$ and $j$ for which we find $\frac{\nu_{p}( b_{ij}) + j }{ i} = d_{p}$, and hence in this case our conjecture would imply $d_{p} = \delta_{p}$. Assuming the conjecture, we have the following corollary.

\begin{cor}\label{cor}
Assume that Conjecture \ref{conjecture} holds. Let $\k \in \B \backslash \{ 1\}$ be a character and let $\mathcal{O}$ be the ring of integers in the extension of $\Q_p$ generated by the values of $\k$.

Then
$$
\frac{\Es_{\k}}{V(\Es_{\k})} \in M_0(\mathcal{O},\ge d_p \cdot \min \{ 1, v_p(w(\k)) \} ) .
$$
\end{cor}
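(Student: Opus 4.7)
The plan is to read off the corollary directly from the formal Katz expansion
$$\frac{\Es_\kappa}{V(\Es_\kappa)} \;=\; \sum_{i=0}^{\infty} \frac{\beta_i(w(\kappa))}{E_{p-1}^{i}},$$
which the introduction has already specialized from the formal expansion to the weight $\kappa\in\B\setminus\{1\}$. Recall that by definition a modular form $f = \sum_{i} b_i/E_{p-1}^i$ of weight $0$ lies in $M_0(\mathcal{O},\ge r)$ when each $b_i\in B_i(\mathcal{O})$ satisfies $\nu_p(b_i)\ge r i$. So it suffices to prove, for every $i\ge 0$, the bound
$$\nu_p\bigl(\beta_i(w(\kappa))\bigr) \;\ge\; d_p\cdot \min\{1,\nu_p(w(\kappa))\}\cdot i.$$

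First I would write $\beta_i(w(\kappa)) = \sum_{j\ge 0} b_{ij}\, w(\kappa)^j$ and apply the ultrametric inequality together with two lower bounds on $\nu_p(b_{ij})$: the trivial one $\nu_p(b_{ij})\ge 0$ (since $b_{ij}\in\Z_p[[q]]$), and the conjectural one $\nu_p(b_{ij})\ge d_p i - j$. Setting $t := \nu_p(w(\kappa)) > 0$, this gives
$$\nu_p\bigl(\beta_i(w(\kappa))\bigr) \;\ge\; \inf_{j\ge 0}\Bigl(\max\{0,\,d_p i - j\} + j t\Bigr).$$

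Second, I would split into two cases according to the position of $\kappa$ in weight space. If $t\ge 1$, each summand is bounded below by $\max\{0,d_pi-j\}+j\ge d_p i$, so the infimum is $\ge d_p i = d_p\cdot \min\{1,t\}\cdot i$. If $0 < t < 1$, I analyze the piecewise linear function $\varphi(j) := \max\{0,d_pi-j\}+jt$ of real $j\ge 0$: on $[0,d_p i]$ it equals $d_p i - j(1-t)$, strictly decreasing from $d_p i$ down to $d_p i\cdot t$; on $[d_p i,\infty)$ it equals $jt$, strictly increasing from $d_p i\cdot t$. The infimum over all real $j\ge 0$ is therefore $d_p i\cdot t$, which is a fortiori a lower bound when $j$ is restricted to integers. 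Thus in both cases the required bound $d_p i \cdot \min\{1,t\}$ holds.

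The argument is essentially a routine two-variable minimization, so there is no real technical obstacle beyond bookkeeping. The only point that deserves a moment of care is ensuring that we are entitled to combine the two sources of positivity for $\nu_p(b_{ij})$ (the $\ge 0$ bound, which is automatic, and the conjectural $\ge d_p i - j$ bound) before taking the infimum; this is precisely what lets the minimum land at $j \approx d_p i$ rather than shooting off to $-\infty$ as $j\to\infty$. Once the per-$i$ bound is established, the corollary follows from the definition of the overconvergence rate via the Katz expansion.
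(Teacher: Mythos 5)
Your proof is correct and is exactly the standard deduction from the linear lower bound on $\nu_p(b_{ij})$; the paper itself does not reproduce an argument but defers to \cite[Proof 3.4]{OP}, and that reference carries out the same minimization (combining $\nu_p(b_{ij})\ge 0$ with $\nu_p(b_{ij})\ge c\,i - j$ and applying the ultrametric inequality term by term in $\beta_i(w(\kappa))$). The only point worth making explicit, which your final sentence glosses over, is that the bound $\nu_p(\beta_i(w(\kappa)))\ge \rho\, i$ with $\rho = d_p\min\{1,t\}>0$ also supplies the auxiliary requirement in the definition of $M_0(\mathcal{O},\ge\rho)$ that the series lie in some $M_0(\mathcal{O},\rho')$, since one may take any $0<\rho'<\rho$ and then $\nu_p(\beta_i(w(\kappa)))-i\rho'\ge i(\rho-\rho')\to\infty$.
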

See Section \ref{theory} for the precise definition of $M_0(\mathcal{O},\ge d_p \cdot \min \{ 1, v_p(w(\k)) \} )$. The proof that the conjecture implies Corollary \ref{cor} can be found in \cite[Proof 3.4]{OP}.
The motivation for considering the value $\delta_{p}$ is because of a conjecture Coleman made regarding the overconvergence rate of $ E / V(E)$. His conjecture seems to be too optimistic and in \cite{OP} a counterexample is given, and a slightly different overconvergence rate from Coleman is proven. However, this overconvergence rate seems to be not 'optimal', in the sense that the constant $c_{p}$ is strictly smaller than the conjectured value $\d_{p}$. For $p=2,3$, information about the precise overconvergence rates of $E/VE$ is used to obtain information about the geometry of the eigencurve near the boundary. In particular, it is shown that, close enough to the boundary, the eigencurve is a countable disjoint union of annuli. This provides information regarding the slopes close enough to the boundary, see \cite{BuzCal} and \cite{Roe}. \\

The precise value of $d_{p}$ in Conjecture \ref{conjecture} is based on data obtained using a method, which we will describe in this paper. We will start by giving the necessary theoretical background regarding (formal) Katz expansions. After this, we will provide two algorithms. The first algorithm will take as input a prime $p \geq 5$, two positive integers $n$ and $C$, and a power series in $\Z_{p}[[q]]/(q^{N},p^{C})$ (where $N$ is an explicit constant depending on $n$), and will output the first $n+1$ terms of the Katz expansion, with respect to an explicit splitting of Equation \ref{splitting}, which is particularly useful for computational methods. The second algorithm is the main algorithm and it uses Algorithm 1. It takes as input a prime $p \geq 5$, a nonnegative integer $r$ and a list of integral weights $L = [\kappa_{1}, \ldots, \kappa_{\lambda}]$, for some integer $\lambda \geq 0$. The output will be the values $\nu_{p}(b_{r,j})$ for $0 \leq j \leq r$ if these can be determined exactly. Note that the output allows us to tell if they are indeed exact, or if we cannot conclude the value of some $\nu_{p}(b_{r,j})$. Increasing the number of weights in the input allows us (modulo some technicalities, see the discussion after Algorithm \ref{alg2}) in general to get a conclusive value for $\nu_{p}(b_{r,j})$ for fixed $0 \leq j \leq r$. \\
In the final section we provide data obtained using the second algorithm. In particular, our data shows that for our obtained value we have the bound $\nu_{p}(b_{i,j}) \geq d_{p}i -j$ for  $d_{p} = \frac{p-1}{p(p+1)} $. We indeed know from \cite{OP} that a lower bound of this form exists, but the constant $d_{p}$ differs from the proven constant. \\

Note that for the primes $p$ such that $X(p)$ has genus $0$ (i.e. $p \in \{2,3,5,7,13\}$) we have the so called hauptmodul, defined by
\[
f_{p}(z) := \left( \frac{\Delta(pz)}{\Delta(z)} \right) ^{\frac{1}{p-1}},
\]
where $\Delta$ is the normalized cuspform of level $1$ and weight $12$. This function will generate the function field of $X_{0}(p)$ and can be used to measure the overconvergence rates of overconvergent modular forms (see \cite{Lo}). A result of Buzzard and Calegari \cite{BuzCal} for $p=2$, and for $p=3$ by Roe \cite{Roe}, shows that we can write $E/VE$ as a power series in $\Z_{p}[[w,f]]$, and if we write $E/VE = \sum_{i,j \geq 0} a_{i,j}f^{i}w^{j}$ then we have a lower bound for $\nu_{p}(a_{i,j})$ which in both their cases is linear in $i-j$. However, for other primes $X_{0}(p)$ will be of genus strictly higher than $0$ and hence there will not be a single hauptmodul which can measure the overconvergence rate, but one can replace the theory by using Katz expansions, motivating our interest in information regarding the "Formal Katz expansion".\\

\subsection*{Acknowledgments}
This work was supported by the Luxembourg National Research Fund PRIDE17/1224660/GPS.

\section{Theoretical Background}\label{theory}
We will start by exposing the theory of overconvergent modular forms à la Katz. While nowadays there exists a more geometric and intrinsic definition of an overconvergent modular form by the works of Pilloni \cite{Pil} and Andreatta, Iovita, Stevens \cite{AIS}, the theory by Katz has the advantage that it is very explicit and allows one to do explicit computations with them. Because of this, we will only focus on the definition provided by Katz. We fix a prime $p \geq 5$ and for each integer $k$ we let $M_{k}(\Z_{p})$ denote the space of weight $k$ modular forms of level $1$ with coefficients in $\Z_{p}$ : $(M_{k}(1) \cap \Z[[q]]) \otimes \Z_{p}$. We let $E_{p-1}$ be the classical Eisenstein series of weight $p-1$, normalized such that it has constant coefficient $1$. From Lemma 2.6.1 in \cite{K}, we know that there is a (non-canonical) splitting
\[
M_{k+i(p-1)}(\Z_{p}) = E_{p-1} \cdot M_{k+(i-1)(p-1)}(\Z_{p}) \oplus B_{i}(\Z_{p}),
\]
where the $B_{i}(\Z_{p})$ are free $\Z_{p}$-submodules of $M_{k+i(p-1)}(\Z_{p})$. Katz then shows that, given such a splitting, for $\rho \in \R$, the $\rho$-overconvergent modular forms of weight $0$ and tame level $1$ can be written as a series of the form
\[
\sum_{i=0}^{\infty} \frac{b_{i}}{E_{p-1}^{i}},
\]
where $b_{i} \in B_{i}(\Z_{p})$, $v_{p}(b_{i}) \geq i\rho$ and  $v_{p}(b_{i}) - i\rho \rightarrow \infty$ for $i \rightarrow \infty$. Sometimes, these are called $r$-overconvergent modular forms, where $r\in \C_{p}$ such that $\nu(r)=\rho$. We denote the space of the $\rho$-overconvergent modular forms by $M_{0}(\Z_{p}, \rho)$. If we have an overconvergent modular form, $f \in M_{0}(\Z_{p}, \geq \rho)$, then it can be written as such a series, and we refer to it as the Katz-expansion (even though such a series, of course, depends on the chosen splitting and hence the Katz-expansion is only unique after fixing a splitting).
 We shall also use the following notation. If we have a rational $\rho \in [0,1]$, we let $M_{0}(\Z_{p}, \geq \rho)$ be the $\Z_{p}$-module of forms $f$ such that$ f \in M_{0}(\Z_{p}, \rho')$, for some $\rho' \in \R$, and $v_{p}(b_{i}) \geq i\rho$, for all the coefficients $b_{i}$ of the Katz expansion of $f$. Note that the whole discussion above carries through if we use the ring of integers $\mathcal{O}$ of some finite extension $K/\Q_{p}$, and we can define in a similar fashion the modules $M_{0}(\mathcal{O}, \rho)$ and $M_{0}(\mathcal{O}, \geq \rho)$. \\
 \\

Lauder has given an explicit splitting (see \cite{L}), which is easy to compute with, and we will give a short description of this.  While it is possible to work with higher levels, we will not use this. As we work mainly over $\Z_{p}$, we will suppress this from our notation and denote these spaces by $B_{i}$; if we need to work over another ring we will write it explicitly. To describe the spaces $B_{i}$, we start by defining some auxiliary functions. For $n$ a non-negative integer we define
\[
d_{n} := \left \lfloor \frac{n}{12}  \right \rfloor +  \begin{cases} 
      1 & n \not \equiv 2 \mod{12} \\
      0  & n \equiv 2 \mod{12}, \\

   \end{cases}
\]
that is, $d_{n}$ is the dimension of the classical space of modular forms of weight $n$ and level $1$. We also have the following function

\[
\epsilon(k) := \begin{cases}
	0 & k \equiv 0 \mod{4}\\
	1 &	k \equiv 2 \mod{4}.\\
	\end{cases}
\]
Then, for a fixed $i \geq 0$ and $ j \geq 0$ we define

\begin{equation}\label{gijdef}
g_{i,j} := \Delta^{j}E_{4}^{a}E_{6}^{\epsilon(i(p-1))},
\end{equation}

where 
\[
a = \frac{i(p-1)-12j-6\epsilon(i(p-1))}{4},
\]
and $\Delta$ is the normalized weight $12$ cusp form. Note that for $j=0,\ldots, d_{n}-1$ the numbers $a$ are nonnegative integers and the $g_{i,j}$ are weight $i(p-1)$ modular forms (of level $1$) and the $q$-expansion of $g_{i,j}$ starts with $q^{j}$. Then we put $B_{0}(\Z_{p}) := \Z_{p}$ and for $i > 0$ we let $B_{i}(\Z_{p})$ be the free $Z_{p}$-module spanned by 
\[
\mathcal{B}_{i} := \{ g_{i,j} | d_{(i-1)(p-1)} \leq j \leq d_{i(p-1)} - 1 \}.
\]
The spaces $B_{i}$ then give a splitting as in \eqref{splitting}. Note that if we fix a $j\geq 0$, then there is a unique $i \geq 0$ such that $g_{i,j} \in \mathcal{B}_{i}$; we can find it by picking the unique $i$ such that $d_{(i-1)(p-1)} \leq j \leq d_{i(p-1)} - 1$, we will denote this element by $i_{j}$. If we write $g_{j}$ we mean the element $g_{i_{j},j}$. Note that for any $j \geq 0$ there exists a $g_{j}$, but, depending on the prime, the $\mathcal{B}_{i}$ might be empty for certain $i$. For example, if $p=5$ then we have that $\mathcal{B}_{i} = \emptyset$ unless $i$ is a multiple of $3$.\\

The main use of this specific splitting is that the (infinite) matrix whose rows contain the coefficients of the $q$-expansions of 
\[
g_{0,0}, \ldots, g_{i,d_{(i-1)(p-1)}}, \ldots, g_{i,d_{i(p-1)-1}},
\]
is upper triangular with $1$'s on the diagonal. As the $q$-expansion of $E_{p-1}$ is in $1 + p\Z_{p}[[q]]$, also the $q$-expansion of $E_{p-1}^{-i}$ will be in $1 + p\Z_{p}[[q]]$, and thus the (infinite) matrix whose rows contain the coefficients of the $q$-expansions of 
\[
g_{0,0}, \ldots, g_{i,d_{(i-1)(p-1)}}E_{p-1}^{-i}, \ldots, g_{i,d_{i(p-1)-1}}E_{p-1}^{-i}, \ldots
\]
will also be upper triangular, with $1$'s on the diagonal. This implies that we have an isomorphism $\phi: \prod_{i\ge 0} B_i(\Z_p) \to \Z_p[[q]]$ of $\Z_p$-modules given by
\[
\phi((b_i)_{i\ge 0}) := \sum_{i\ge 0} b_i(q) E_{p-1}^{-i}(q) .
\]

In particular, if we are given the $q$-expansion, say up to $a_{N}(f)q^{N}$, of an overconvergent modular form $f$ this can be turned into an algorithm to compute its Katz expansion (up till some precision), which is the inverse of the map $\phi$.
\\
Our first goal will be to compute the valuations appearing in the "formal Katz expansion" of a family of overconvergent modular forms related to the Eisenstein series. We consider weights, i.e. characters $\kappa: \Z_{p}^{\times} \to \C_{p}^{\times} $. As we have the decomposition $\Z_{p}^{\times} \backsimeq (\Z / p\Z)^{\times} \times 1 + p\Z_{p} $ (as $p \geq 5$), we can consider the characters restricted to $(\Z / p\Z)^{\times}$. We will only consider the characters that are trivial on the $p-1$st roots of unity. We denote this space by $\mathcal{B}$ (this weight space can be given a rigid analytic structure, but we will not need this). The weight space can be identified with the unit disk $\mathcal{W}$ inside $\C_{p}$, via $\kappa \mapsto \kappa(p+1) -1$. An integral weight $k \in \Z$ will be identified with the character $x \mapsto x^{k}$. For a given weight $\kappa$, we have the Eisenstein series of weight $\kappa$ with $q$-expansion given by
\[
E_{\kappa}^{*}= 1 + \frac{2}{\zeta^{*}(\kappa)}\sum_{n=1}{\infty}
\left( 
\sum_{\substack{d|n\\ p \nmid n}} \kappa(d)d^{-1} 
\right) q^{n} 
\]
(note that we remove the Euler factor at $p$). Here $\zeta^{*}(\kappa)$ is the $p$-adic zeta function.  It is known (see \cite{C}) that $E_{\kappa}^{*}/V(E_{\kappa}^{*})$ is overconvergent, where $V$ is the operator acting on the $q$-expansion by $q \mapsto q^{p}$. From \cite[Theorem B]{OP}, we have the following result.

\begin{thm}\label{thm:main_B} (a) There are modular forms $b_{ij}\in B_i(\Z_p)$ for each $i,j \in \Z_{\ge 0}$ such that the following holds. If $\kappa\in \B \backslash \{ 1\}$ then the Katz expansion of the modular function $\frac{\Es_\k}{V(\Es_\k)}$ is
$$
\frac{\Es_\k}{V(\Es_\k)} = \sum_{i=0}^{\infty} \frac{\b_i(w(\k))}{E_{p-1}^{i}}
$$
where
$$
\b_i(w(\k)) := \sum_{j=0}^{\infty} b_{ij} w(\k)^j
$$
for each $i$.

\noindent (b) There is a constant $c_p$ with $0< c_p <1$ such that for the modular forms $b_{ij}$ in part (a) we have
$$
v_p(b_{ij}) \ge c_p i - j
$$
for all $i,j$.

In fact, we can take 

\[
c_{p} = \frac{2}{3} \left(  1 - \frac{p}{(p-1)^{2}} \right)\frac{1}{p+1}.
\]
\end{thm}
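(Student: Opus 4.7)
The plan is to first establish part (a) by tracking the $w$-dependence through the explicit splitting described in Section \ref{theory}, and then to obtain part (b) by combining a per-weight overconvergence bound with a maximum-principle argument on $\Z_p[[w]]$.

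For part (a), I start from the already-noted fact that $E/VE$ lies in $\Z_p[[q,w]]$, so that the $q$-expansion of $\Es_\k/V(\Es_\k)$ is obtained by specializing $w \mapsto w(\k)$ in a power series with $\Z_p$-coefficients. The inverse of the isomorphism $\phi$ of Section \ref{theory} is $\Z_p$-linear and independent of $\k$, being given (in the bases $\{g_{i,j}\}$ of $B_i(\Z_p)$ and $\{q^n\}$ of $\Z_p[[q]]$) by the inverse of an upper-triangular matrix with $1$'s on the diagonal. Applying $\phi^{-1}$ coefficient-by-coefficient to $E/VE$ therefore produces elements $\b_i(w) \in \Z_p[[w]]$ whose expansions in the basis $\{g_{i,j}\}$ have coefficients $b_{ij} \in \Z_p$, as required.

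For part (b), the key analytic input is a family estimate
\[
v_p\bigl(\b_i(w(\k))\bigr) \ge c_p \cdot i \cdot \min\bigl\{1, v_p(w(\k))\bigr\}
\]
valid for every $\k \in \B \setminus \{1\}$. I would derive this in the Coleman--Katz framework by combining three ingredients: a lower bound on $v_p(\zeta^{\ast}(\k))$ in terms of $v_p(w(\k))$ obtained from the Kubota--Leopoldt construction of the $p$-adic zeta function; a bound on the $p$-adic size of the remaining $q$-expansion coefficients of $\Es_\k$, which controls the pollution of the Katz coefficients by the factor $2/\zeta^{\ast}(\k)$; and an analysis of how division by $V(\Es_\k)$ interacts with the chosen Katz basis, where a factor of order $1/(p+1)$ arises naturally from the standard Frobenius-compatible overconvergence radius (this is where the $1/(p+1)$ in $c_p$ should come from). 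Balancing these three bounds should produce the stated constant, with the prefactor $\tfrac{2}{3}(1 - p/(p-1)^2)$ emerging from the worst-case interaction between the zeta bound and the shape of the numerator.

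Once the family bound is in hand, extracting the termwise bound is short. Every integer weight $k \in (p-1)\Z$ satisfies $v_p(w(k)) = v_p((p+1)^k - 1) = 1 + v_p(k) \ge 1$, and the corresponding $w(k)$ are $p$-adically dense in the closed disk $\{|w| \le p^{-1}\}$. The Gauss-norm identity
\[
\sup_{|w|\le p^{-1}} |\b_i(w)|_p \; = \; \sup_j |b_{ij}|_p \cdot p^{-j},
\]
combined with the family bound $|\b_i(w(\k))|_p \le p^{-c_p i}$ on this dense subset, yields $v_p(b_{ij}) + j \ge c_p i$ for every $j$, which is the claim. The main obstacle is the derivation of the explicit constant in the middle paragraph: the three ingredients are individually standard, but assembling them with no unnecessary loss into the stated closed form requires careful simultaneous Newton-polygon bookkeeping for $\Es_\k$ and $V(\Es_\k)$.
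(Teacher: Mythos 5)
This theorem is quoted verbatim from \cite[Theorem~B]{OP}; the paper under review does not prove it, only cites it. So there is no ``paper's own proof'' to compare against directly, but the surrounding material (especially Lemma~\ref{kernellemma} and the remark that Corollary~\ref{cor} follows from the termwise bound via \cite[Proof~3.4]{OP}) reveals a great deal about how the cited proof must go, and that is enough to identify a serious problem with your part~(b).

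Your part~(a) argument is fine: $\phi^{-1}$ is a $\Z_p$-linear map independent of $\k$, given by inverting a unitriangular coefficient matrix, so applying it to $E/VE \in \Z_p[[q,w]]$ coefficient-by-coefficient in $w$ produces the $b_{ij} \in B_i(\Z_p)$ directly. This matches the intended argument.

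For part~(b) there is both a logical circularity and a misattribution of where the constant comes from. Your ``key analytic input'' is the family estimate $v_p(\b_i(w(\k))) \ge c_p\, i \cdot \min\{1, v_p(w(\k))\}$, but this is precisely the content of Corollary~\ref{cor} (with $c_p$ in place of $d_p$), which the paper explicitly states is \emph{deduced from} the termwise bound you are trying to prove, via \cite[Proof~3.4]{OP}. You are proposing to run the implication in the opposite direction, so you would need an entirely independent proof of the family bound, and your ``three ingredients'' sketch does not supply one. Worse, your explanation of where the factor $1 - p/(p-1)^2$ comes from is wrong: that quantity is exactly the bound $f(n)/(n-1) \le p/(p-1)^2$ appearing in Lemma~\ref{kernellemma}, which measures the valuation lost in inverting the Vandermonde matrix built from finitely many weights with $v_p(w_j)=1$. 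This tells you that the actual strategy in \cite{OP} is to interpolate the $b_{ij}$ from the Katz expansions $\b_i(w(\k_\ell))$ at finitely many classical weights by inverting a Vandermonde system, paying a $1-p/(p-1)^2$ penalty for the near-singularity of that system modulo $p$. Your Gauss-norm/density argument is \emph{lossless}: if the family bound with constant $c_p$ held on a dense subset of the closed disk $\{v_p(w)\ge 1\}$, the Gauss-norm identity would give $v_p(b_{ij}) + j \ge c_p i$ with the \emph{same} $c_p$, and then feeding a family bound with the larger constant $\tfrac{2}{3(p+1)}$ into the same argument would yield a termwise bound strictly stronger than the theorem. That this is not what happens is a strong signal that the uniform family bound with that constant is simply not available as an input — the known per-weight Coleman-type estimates are weaker, or hold only with offsets, or only at a discrete set of weights, which is exactly why \cite{OP} resorts to the Vandermonde interpolation and accepts the associated loss.
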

We wish to compute $\nu(b_{i,j})$ and to compare it with the values ensured by the lower bound in the above theorem. The main idea for an algorithm to compute these valuations is using the existence of a "formal Katz expansion", as in statement (a) of Theorem \ref{thm:main_B} and to compute the Katz expansion for enough classical weights to deduce information about this formal Katz expansion. In the next section we will describe two algorithms, the first one will compute the Katz expansion of any weight $0$ overconvergent modular form, and the second algorithm will return (modulo some technicalities) the valuations $\nu(b_{i,j})$.

\section{The Algorithms}
Recall that we have an isomorphism $\psi : \Z_{p}[[q]] \to \prod_{i\geq 0} B_{i}(\Z_{p})$, the inverse of the map $\phi$ introduced in the previous section. Note that this map attaches to a power series (in particular to an overconvergent modular form of weight $0$ and with coefficients in $\Z_{p}$) its Katz expansion. Our first algorithm will have as its goal to compute this Katz expansion, with a modular form (or a power series) as its input. We have to be  careful with the precisions we choose for this. Fix an integer $n \geq 0$ and set $N := d_{n(p-1)}$. Then, for $m>n$, if $g \in B_{m}(\Z_{p})$ we have that $g \equiv 0 \mod{q^{N}}$, so that $\phi$ descends to a map:
$
\phi_{n} : \prod_{i=0}^{n} B_{i}(\Z_{p}) \to \Z_{p}[[q]]/(q^{N}).
$
We have the following:
\begin{lem}\label{alg1proof}
The map \begin{equation}\label{partkatz}
\phi_{n} : \prod_{i=0}^{n} B_{i}(\Z_{p}) \to \Z_{p}[[q]]/(q^{N}), \qquad(b_{i})_{i=0}^{n} \mapsto \sum_{i=0}^{n}  \frac{b_{i}}{E_{p-1}^{i}},
\end{equation} is an isomorphism.
\end{lem}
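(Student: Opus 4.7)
The plan is to exhibit compatible bases of the two sides and show that the matrix of $\phi_n$ with respect to these bases is upper triangular with ones on the diagonal, hence invertible over $\Z_p$.

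First, I would verify that both sides are free $\Z_p$-modules of the same rank $N = d_{n(p-1)}$. The right hand side $\Z_p[[q]]/(q^N)$ is clearly free of rank $N$ with basis $\{1, q, q^2, \ldots, q^{N-1}\}$. For the left hand side, $B_0(\Z_p) = \Z_p$ has rank $d_0 = 1$, and for $i \ge 1$ the module $B_i(\Z_p)$ is free of rank $d_{i(p-1)} - d_{(i-1)(p-1)}$, with basis $\mathcal{B}_i = \{g_{i,j} : d_{(i-1)(p-1)} \le j \le d_{i(p-1)} - 1\}$. Telescoping the sum over $i=0,\ldots,n$ gives total rank $d_{n(p-1)} = N$.

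Next, I would observe that as $i$ runs from $0$ to $n$ and $j$ runs over the range defining $\mathcal{B}_i$, the pair $(i_j, j)$ traces out exactly the values $j = 0, 1, \ldots, N-1$, each exactly once. Thus a $\Z_p$-basis of $\prod_{i=0}^n B_i(\Z_p)$ is given by $\{g_{i_j, j}\}_{j=0}^{N-1}$, viewed as tuples concentrated in the appropriate factor. I would then compute the image of this basis under $\phi_n$: each basis element maps to $g_{i_j, j} \cdot E_{p-1}^{-i_j}$.

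The key input from Section \ref{theory} is that the $q$-expansion of $g_{i,j}$ begins with $q^j$ (coefficient $1$), together with $E_{p-1} \in 1 + p\Z_p[[q]]$, which gives $E_{p-1}^{-i} \in 1 + p\Z_p[[q]]$. Consequently $g_{i_j, j} E_{p-1}^{-i_j} = q^j + (\text{terms of order} > j)$ in $\Z_p[[q]]$. Reducing modulo $q^N$, the matrix of $\phi_n$ in the bases $\{g_{i_j, j}\}_{j=0}^{N-1}$ and $\{q^j\}_{j=0}^{N-1}$ is upper triangular with all diagonal entries equal to $1$. Such a matrix has determinant $1$ and is therefore invertible over $\Z_p$, which shows that $\phi_n$ is an isomorphism.

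There is no real obstacle here; the only mild point requiring care is the bookkeeping that as $(i,j)$ ranges over pairs with $0 \le i \le n$ and $g_{i,j} \in \mathcal{B}_i$, the index $j$ takes each value in $\{0, 1, \ldots, N-1\}$ exactly once, so that ordering the basis by $j$ is unambiguous and produces the upper triangular matrix above.
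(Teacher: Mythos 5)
Your proof is correct and takes essentially the same route as the paper: both rest on the observation that, with respect to the basis $\{g_{i_j,j}/E_{p-1}^{i_j}\}_{j=0}^{N-1}$ of the image (using that $g_{i,j}$ has $q$-expansion beginning with $q^j$ and $E_{p-1}^{-i}\in 1+p\Z_p[[q]]$), the matrix of $\phi_n$ is triangular with $1$'s on the diagonal and hence invertible over $\Z_p$. The paper phrases it as explicitly solving the linear system $Mx=B$ by back-substitution, while you state the invertibility via the determinant and add the (correct but implicit in the paper) telescoping rank count $\sum_i \operatorname{rk} B_i(\Z_p)=d_{n(p-1)}=N$; these are presentational differences only.
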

\begin{proof}
If we are given an element $f \in \Z_{p}[[q]]/(q^{N}) $ and we want to find an inverse, then we have to solve the following matrix system 
\[
Mx = B,
\]
where $M$ is the $N \times N$ matrix whose $j$th column consists of the coefficients of the $q$-expansion of $g_{j}/E_{p-1}^{i_{j}}$, and where $B$ is the column vector consisting of the coefficients of $f$. As $M$ is lower triangular, with $1$'s on the diagonal, the system will have a unique solution. The preimage of $f$ is then given by $(b_{i})_{i=0}^{n} $, where $b_{i} = x_{d_{(i-1)(p-1)}} g_{d_{(i-1)(p-1)}} + \ldots + x_{d_{i(p-1)-1}} g_{d_{i(p-1)-1}}$, which is uniquely determined and hence $\phi_{n}$ is an isomorphism.
\end{proof}
 We denote the inverse of the map given in \eqref{partkatz} by
\[
\psi_{n} : \Z_{p} [[q]] / (q^{N} )\to \prod_{i=0}^{n} B_{i}(\Z_{p}).
\]
The following algorithm computes this map to any given $p$-adic precision, and hence can be seen as computing the partial Katz expansion of a given overconvergent modular form. 
\begin{alg}
Given a prime $p \geq 5$, positive integers $n$ and $C$, and a power series $f$ in $\Z[[q]]/(q^{N},p^{C})$, where $N = d_{n(p-1)}$, this algorithm returns $\psi_{n}(f)$, as an $(n+1)$-tuple, with $p$-adic precision $C$.
\end{alg}
\begin{enumerate}
\item \emph{Dimension of $\mathcal{B}_{i} (\Z_{p})$}: Compute the values $i_{j}$ for the values $j=0, \ldots, N-1$.   \\
\item \emph{Basis of $\mathcal{B}_{i} (\Z_{p})$} Compute the $q$-expansions of the forms $g_{j} \in \mathcal{B}_{i_{j}}$ for $j=0, \ldots, N-1$ up till $q^{N-1}$ with coefficients in $\Z$, using Equation \eqref{gijdef}, after having computed the $q$-expansions of $E_{4}$, $E_{6}$ and $\Delta$ up to $q^{N}$. We normalize $E_{4}$ and $E_{6}$ to have constant coefficient $1$.  \\
\item \emph{Coefficient matrices} Create the $N\times N$ matrix $M$ which has as $j$th column the coefficients of the $q$-expansion $g_{j}E_{p-1}^{-i_{j}}$ up till $q^{N-1}$. This matrix will be lower triangular with $1$s on the diagonal. \\
\item \emph{Katz expansion} Create the column vector $B$, containing the coefficients $a_0(f),\ldots, a_{N-1}(f)$. Solve the equation $Mx = B$ for $x$ over $\Z_{p}/(p^{C})$. Let the solution be given by $x = (x_{0}, \ldots, x_{N-1})^{T}$. Return the tuple $(f_{0},\ldots , f_{n})$, where $f_{i} = x_{d_{(i-1)(p-1)}} g_{d_{(i-1)(p-1)}} + \ldots + x_{d_{i(p-1)-1}} g_{d_{i(p-1)-1}}$.
\end{enumerate}
The correctness of the algorithm is a consequence of Lemma \ref{alg1proof}. Note that steps (1)-(3) only depend on $p$ and $n$, and thus, if working with a fixed prime and precision, the matrix $M$ and the pairs $(i,j)$ found in step (1) can be saved and only step (4) has to be executed.
\\
For the second algorithm, we investigate the "formal Katz expansion" of the family $\Es_{\kappa}/V(\Es_{\kappa})$ as in Theorem \ref{thm:main_B}. The theorem asserts the existence of modular forms $b_{ij}\in B_i(\Z_p)$ for each $i,j \in \Z_{\ge 0}$. The following algorithm allows us to compute $\nu(b_{i,j})$, if the number of weights given as the input is enough, and otherwise it will return "inconclusive", and one will have to adapt the input.

\begin{alg}\label{alg2}
Given a prime $p \geq 5$, a nonnegative integer $r$ and a list $L$ of integral weights $\kappa_{1}, \ldots \kappa_{\lambda}$, (so $\lambda$ = $\#L$)  this algorithm will output the values $\nu(b_{r,j})$, for $0 \leq j \leq r$, if this can be determined exactly, and otherwise it will return "inconclusive" if the choice of weights does not allow us to conclude.
\end{alg}

\begin{enumerate}
\item \emph{Construct the Eisenstein series} First construct the Eisenstein series $E_{\kappa_{i}}^{*}$ for the weights in the given list, and then construct $\Es_{\kappa_{i}}/V(\Es_{\kappa_{i}})$, as elements of $(\Z/p^{\lambda}\Z)[[q]]/(q^{N})$, where $N := d_{r(p-1)}$.\\
\item \emph{Katz expansions} Use Algorithm 1 to compute for all $i =1 ,\ldots, \lambda$ the Katz expansions of $\Es_{\kappa_{i}}/V(\Es_{\kappa_{i}})$ up to the $r$th term, say $\beta_{r}^{(i)}$. This means that we need a precision of $n=r$ and $C = \lambda$ in Algorithm 1.\\
\item \emph{Construct the Vandermonde matrix} Construct the Vandermonde matrix 

\[ V :=
\begin{bmatrix}
1 & w_{1} & \ldots & w_{1}^{\lambda-1} \\
\vdots & & \ddots  & \vdots \\
1 & w_{\lambda} & \ldots & w_{\lambda}^{\lambda-1} \\
\end{bmatrix}, 
\]
over $\Z / p^{\lambda}\Z$ and where $w_{i} = (p+1)^{\kappa_{i}} - 1$. Compute a set of generators for the (left) kernel, which is a subgroup of $(\Z/p^{\lambda}\Z)^{\lambda}$, say $\mathcal{V}$. One can compute a set of generators for the kernel by, for instance, computing the Smith normal form of $V$. Then, for every $1 \leq i \leq \lambda$ compute $\gamma_{i} := \text{min} \{ \nu(v_{i})) | v \in \mathcal{V} \}$, where $v_{i}$ denotes the $i$th component of the vector $v$. \\
\item \emph{Solve linear systems} Define $S := \lceil r(p-1)/12 \rceil$. For $0 \leq i \leq S$, compute the column vector $\theta_{i}$, which has as $l$th entry the $i$th coefficient of $\beta_{r}^{(l)}$, over $\Z / p^{\lambda}\Z$, for $0\leq l \leq \lambda$. Compute the $S+1$ solutions $x_{i}$ of the matrix equations $Vx_{i}=\theta_{i}$, over $\Z / (p^{\lambda})$ (up to an element in the kernel of $V$). \\
\item \emph{Find the minimum valuation} For all $1 \leq j \leq r$ compute for $0 \leq i \leq S$ the minimum of the  values $\nu((x_{i})_{j})$, say $\alpha_{r,j}$, and return the list $[\alpha'_{r,1}, \ldots , \alpha'_{r,r}]$, where $\alpha'_{r,j}=\alpha_{r,j}$ if $ \alpha_{r,j} < \gamma_{j}$, and "$\alpha'_{r,j}$ is inconclusive" if $\alpha_{r,j} \geq \gamma_{j}$. \\
\end{enumerate}
To prove the correctness of Algorithm 2, we rely on Theorem \ref{thm:main_B}, in particular on the existence of the forms $b_{i,j}$. Hence, if the Katz expansion of $\Es_{\kappa_{i}} / V(\Es_{\kappa_{i}})$ is given by $\sum_{j\geq0} \beta_{j}^{(i)}/ E_{p-1}^{j}$, we get the following equation
\[
\beta_{j}^{(i)} = \sum_{l=0}^{\infty}b_{j,l}w_{i}^{l}.
\]
Now, $\nu(w_{i}) \geq 1$ and $\nu(b_{i,j}) \geq 0$, so reducing modulo $p^{\lambda}$ then gives us

\[
\beta_{j}^{(i)} \equiv \sum_{l=0}^{\lambda -1} b_{j,l}w_{i}^{l}   \mod{p^{\lambda}}.
\]
As we get this for every weight $\k_{1},\ldots, \k_{\lambda}$ we obtain, for any $\mu \in \N$,  the following matrix equation over $\Z/p^{\lambda}\Z$:

\[
\begin{bmatrix}
1 & w_{1} & \ldots & w_{1}^{\lambda-1} \\
\vdots & & \ddots  & \vdots \\
1 & w_{\lambda} & \ldots & w_{\lambda}^{\lambda-1} \\
\end{bmatrix} 
\begin{bmatrix}
a_{\mu}(b_{r,0}) \\
\vdots \\
a_{\mu}(b_{r,\lambda-1}) \\
\end{bmatrix}
=
\begin{bmatrix}
a_{\mu}(\beta_{r}^{(1)}) \\
\vdots \\
a_{\mu}(\beta_{r}^{(\lambda)})\\
\end{bmatrix},
\]
where $a_{\mu}$ denotes the $\mu$-th Fourier coefficient in the $q$-expansion. Note that the right hand side is known, as these are the coefficients appearing in the Katz expansion, which we can compute using Algorithm 1. We can solve this linear system, giving us a solution for $a_{\mu}(b_{r,0}), \ldots, a_{\mu}(b_{r,\lambda})$. Note that we know such a solution exists, as the forms $b_{i,j}$ exist, but this matrix equation might not have a unique solution, as $V$ (the Vandermonde matrix) is in general not invertible over $\Z/p^{\lambda}\Z$. However, we can find a set of generators for the kernel of this matrix. If we let $\gamma_{i}$ denote the minimum of the valuations of the $i$th entry of this set of generator, then we do know that if the valuation of the solution we find, say $\nu(a_{\mu}(b_{r,i}) < \gamma_{i}$, then this is the same valuation for any other solution. To conclude that $\nu(b_{r,i})$ is the minimum of $\nu(a_{\mu}(b_{r,i}))$, we apply the following lemma:
\begin{lem}
If $f \in M_{n}(\Z)$ and $\nu(a_{i}(f)) \geq b$ for $i=0, \ldots, \lceil n/12 \rceil$, then $\nu(f) \geq b$.
\end{lem}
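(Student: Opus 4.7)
The plan is to exploit the integral basis $\{g_{n,j} : 0 \le j \le d_n - 1\}$ of $M_n(\Z_p)$ described in Section~\ref{theory}. Recall from the construction that each $g_{n,j}$ lies in $\Z[[q]]$ and that its $q$-expansion begins with $q^j$ with leading coefficient $1$; consequently the matrix $T$ of size $d_n \times d_n$ with entries $T_{ij} = a_i(g_{n,j})$ is lower-triangular and unipotent, so in particular $T \in \GL_{d_n}(\Z)$.

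First I would write $f = \sum_{j=0}^{d_n - 1} c_j g_{n,j}$ uniquely with $c_j \in \Z_p$. Matching the first $d_n$ Fourier coefficients yields the triangular system $a_i(f) = \sum_{j \le i} T_{ij} c_j$ for $0 \le i \le d_n - 1$. Solving it inductively gives
\[
c_j = a_j(f) - \sum_{i < j} T_{ji}\, c_i,
\]
which expresses $c_j$ as an integral linear combination of $a_0(f), \ldots, a_j(f)$. Hence if these first $d_n$ Fourier coefficients all have $p$-adic valuation at least $b$, so does every $c_j$; and since $g_{n,j} \in \Z[[q]]$, the form $f = \sum_j c_j g_{n,j}$ then has every Fourier coefficient of valuation at least $b$, which is exactly $\nu(f) \ge b$.

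The remaining point is to verify the numerical inequality $d_n - 1 \le \lceil n/12 \rceil$, so that the hypothesis on $a_0(f), \ldots, a_{\lceil n/12 \rceil}(f)$ does control $a_0(f), \ldots, a_{d_n - 1}(f)$. This is routine from the definition of $d_n$: in the case $n \not\equiv 2 \pmod{12}$ we have $d_n = \lfloor n/12 \rfloor + 1 \le \lceil n/12 \rceil + 1$, and in the exceptional case $n \equiv 2 \pmod{12}$ we have $d_n = \lfloor n/12 \rfloor < \lceil n/12 \rceil + 1$. I do not anticipate any serious obstacle; the crux is simply that integrality of the change-of-basis is forced by the unipotent lower-triangular shape of $T$, which in turn is inherited from the shape of the $q$-expansions of the $g_{n,j}$.
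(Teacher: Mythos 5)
Your proof is correct, but it takes a genuinely different route from the paper. The paper invokes Sturm's theorem (the bound on the $q$-adic order of vanishing of a modular form that vanishes modulo a prime), applied with $\mathfrak{m}=(p)$, together with an induction on the valuation $b$: the base case $b=1$ is Sturm directly, and the inductive step divides $f$ by $p^{b-1}$ (the paper has a typo writing $p^{b-1}f$ where it means $f/p^{b-1}$) and reapplies the base case. Your argument instead stays entirely within the explicit-basis framework the paper has already set up: you expand $f$ in the integral echelon (Victor Miller) basis of weight-$n$ forms, observe that the change-of-basis matrix is unipotent lower-triangular over $\Z$, and use back-substitution to propagate the valuation bound from the first $d_n$ Fourier coefficients to all the basis coefficients $c_j$, then to all of $f$ by integrality of the basis. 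Both reduce to the same numerical fact $d_n - 1 \le \lceil n/12 \rceil$, so neither is sharper here; what yours buys is elementarity and self-containment (no appeal to Sturm or to the theory of modular forms mod $p$), at the mild cost of having to restate the basis construction for general weight $n$ — the paper's $g_{i,j}$ denotes a form of weight $i(p-1)$, so you are really applying the same monomials $\Delta^j E_4^a E_6^{\epsilon(n)}$ at an arbitrary weight $n$, which is fine but worth flagging since the notation $g_{n,j}$ as used clashes slightly with the paper's indexing.
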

\begin{proof}
We will use Sturm's theorem, which says that if $\mathfrak{m}$ is a prime ideal of the ring of integers $\mathcal{O}$ of a number field $K$, $\Gamma$ a congruence subgroup of $\SL_{2}(\Z)$ of index $m$ and $f \in M_{k}(\Gamma,\mathcal{O})$ such that
\[
 \text{ord}_{q}(f \text{ mod }{\mathfrak{m}}) > km/12 ,
\]
then $f \equiv 0 (\text{ mod }{\mathfrak{m}})  $ \cite{S}. We will apply Sturm's theorem with $K =\Q$, $\mathcal{O}=\Z$ and $\mathfrak{m} = (p)$ and use induction to prove the lemma. The case that $b=1$ immediately follows from Sturm's theorem. Then, for the general case, the induction hypothesis implies that $p^{b-1}f \in M_{k}(\mathcal{O})$ and we apply Sturm's theorem to $p^{b-1}f $, which implies that $p^{b-1}f \equiv 0 \mod(p)$, and hence $\nu(f) \geq b$.
\end{proof}
Hence, for a fixed $0 \leq l \leq r$, if we know $\nu(a_{\mu}(b_{r,l}))$ for $0 \leq \mu \leq \lceil r(p-1)/12 \rceil$, then we know $\nu(b_{r,l})$. To apply this in the algorithm, we need $\beta_{r}^{(i)}$, the $r$th term of the Katz expansion of  $\Es_{\kappa_{i}}/V(\Es_{\kappa_{i}})$, up till a precision of $S := \lceil (p-1)\cdot r/12 \rceil$. However, we remark that $d_{r(p-1)} \geq  \lceil r(p-1)/12 \rceil$ and hence Algorithm 1 returns the Katz expansions with sufficient precision.\\ 
As noted before, the algorithm only finds $\nu (b_{r,j})$ if it is less than the $\gamma_{j}$, since $b_{r,j}$ is only found up to an element in the kernel of $V$. The following lemma shows that we can make $\gamma_{j}$ arbitrarily high by increasing the number of weights.

\begin{lem}\label{kernellemma}
For any $i \in \Z_{\geq 1}$ and $\gamma \in \Z_{\geq0}$, there exist $n \in \Z$ and $w_{1} ,\ldots , w_{n} \in \Z_{p}$ with $\nu_{p}(w_{j})=1$ for all $1 \le j \le n$,  such that for any $\alpha  \in \ker{ (\overline{V})}$, with 
\[
\overline{V} = 
\begin{bmatrix}
1 & w_{1} & \ldots & w_{1}^{n-1} \\
\vdots & & \ddots  & \vdots \\
1 & w_{n} & \ldots & w_{n}^{n-1} \\
\end{bmatrix}  
\in M_{n \times n} ( \Zmodpm{p^n}),
\]
we have $\nu_{p} (\alpha_{i}) \geq \gamma$, (where $\a_{i}$ is the $i$th component of $\a$).
\end{lem}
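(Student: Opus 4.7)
The plan is to use Lagrange interpolation to bound $\nu_p(\alpha_i)$ coefficient-by-coefficient, after making a very symmetric choice of the $w_j$'s. For a parameter $s$ to be determined, take $n := (p-1)p^{s-1}$ and let $u_1, \ldots, u_n \in \Z_p$ be lifts of a complete set of representatives of $(\Z/p^s\Z)^\times$; set $w_j := p u_j$, so that $\nu_p(w_j) = 1$.

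Given $\alpha \in \ker\overline{V}$, I lift it to $\tilde\alpha \in \Z_p^n$ and form the polynomial $P(x) := \sum_{k=1}^n \tilde\alpha_k x^{k-1}$, so $P(w_j) = (V\tilde\alpha)_j \in p^n \Z_p$ for every $j$. Lagrange interpolation in $\Q_p[x]$ then yields
\[
\tilde\alpha_i = \sum_{j=1}^n P(w_j)\, \ell_{j,i},
\]
where $\ell_{j,i}$ is the coefficient of $x^{i-1}$ in $L_j(x) := \prod_{k\neq j}(x-w_k)/\prod_{k\neq j}(w_j-w_k)$. The numerator of $\ell_{j,i}$ is $\pm e_{n-i}$ evaluated at $n-1$ elements of valuation $1$ and so has $\nu_p \geq n-i$, while the denominator has valuation $\sum_{k\neq j}\nu_p(w_j-w_k)$. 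Combining these with $\nu_p(P(w_j))\geq n$ yields
\[
\nu_p(\tilde\alpha_i) \geq 2n - i - M, \qquad M := \max_{1 \leq j \leq n} \sum_{k\neq j}\nu_p(w_j-w_k).
\]

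The decisive step is to evaluate $M$ explicitly for this symmetric choice. Since $\{u_1, \ldots, u_n\}$ contains exactly one lift of each unit mod $p^s$, for every fixed $j$ and every $1 \leq t \leq s-1$ the number of $k \neq j$ with $u_k \equiv u_j \pmod{p^t}$ is $p^{s-t}-1$. Summing over $t$ gives
\[
\sum_{k\neq j}\nu_p(u_j-u_k) = \sum_{t=1}^{s-1}(p^{s-t}-1) = \frac{p^s - p}{p-1} - (s-1),
\]
independent of $j$; so $M = (n-1) + (p^s - p)/(p-1) - (s-1)$, and a short algebraic simplification produces
\[
2n - i - M = \frac{p^{s-1}(p^2 - 3p + 1) + p}{p-1} + s - i.
\]
Since $p \geq 5$ forces $p^2 - 3p + 1 \geq 11 > 0$, the right-hand side grows without bound with $s$; picking $s$ so that it exceeds $\gamma$ completes the proof. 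The main obstacle is carrying out the combinatorial count that leads to the closed formula for $M$ and then confirming that the leading coefficient $p^2 - 3p + 1$ is strictly positive in the regime $p \geq 5$, which is what ultimately allows $2n - i - M$ to be made arbitrarily large while the constraint $\nu_p(w_j)=1$ is preserved.
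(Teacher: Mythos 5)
Your proof is correct, and it reaches the same conclusion along a closely parallel but more self-contained path. Both your argument and the paper's hinge on the same three ingredients: the factor $p^n$ coming from $\alpha \in \ker(\overline{V})$, a lower bound $\nu_p(e_{n-i}) \ge n-i$ on the numerators of the inverse Vandermonde entries, and control on the denominator valuations $\sum_{k\neq j}\nu_p(w_j - w_k)$. The difference is in how the last piece is handled. The paper keeps the $w_j$ arbitrary (subject to $\nu_p(w_j)=1$), invokes \cite[Lemma 3.1, Prop.\ 3.2]{OP} to express $(V^{-1})_{i,j}$ and to bound $\max_j \nu_p\bigl(\prod_{\ell\neq j}(w_j - w_\ell)\bigr)$ by $n-1+f(n)$ when the $w_j$ are optimally chosen, and then uses the estimate $f(n) \le (n-1)\frac{p}{(p-1)^2}$, noting $\frac{p}{(p-1)^2}<1$ so the bound diverges with $n$. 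You instead pick the explicit symmetric family $w_j = p u_j$ with $u_j$ ranging over lifts of $(\Z/p^s\Z)^\times$ and $n=(p-1)p^{s-1}$, phrase the inverse Vandermonde via Lagrange interpolation (which is the same formula in different clothing), and compute $M = \max_j \sum_{k\neq j}\nu_p(w_j-w_k)$ exactly by counting congruences level by level, arriving at $2n-i-M = \frac{p^{s-1}(p^2-3p+1)+p}{p-1}+s-i$. One can check this matches $n+1-i-f(n)$ with $f(n)=\frac{p^s-p}{p-1}-(s-1)$ for $n=(p-1)p^{s-1}$, so the two computations agree on the nose. What your version buys is a proof that does not lean on external lemmas from \cite{OP} and gives a closed-form growth rate rather than an inequality; what the paper's version buys is that its argument is formulated for a generic admissible choice of $w_j$, which is exactly what it needs in the discussion that follows (via Lemma~\ref{valuationlem}) to show that the $w_j$ arising from \emph{classical} weights $k=s(p-1)$ achieve the optimum, so that the kernel control feeds directly into Algorithm~2. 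Your $u_j$ are arbitrary unit lifts and not obviously of the form $((p+1)^{s(p-1)}-1)/p$, so your proof establishes the lemma as literally stated but would need the analogue of Lemma~\ref{valuationlem} as an additional step to serve the algorithmic application.
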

\begin{proof}
Given $ w_{1}, \ldots , w_{n} \in  \Z_{p}$ with $\nu_{p}(w_{j})=1$ for all $1 \le j \le n$, we denote 
\[
V := \begin{bmatrix}
1 & w_{1} & \ldots & w_{1}^{n-1} \\
\vdots & & \ddots  & \vdots \\
1 & w_{n} & \ldots & w_{n}^{n-1} \\
\end{bmatrix}  
\in M_{n \times n} ( \Q_{p}),
\]
and we denote by $\overline{V}$ the matrix as in the statement of the lemma (i.e. the matrix $V$ reduced modulo $p^{n}$). Note that since the $w_{j}$ are distinct, $V$ will be invertible over $\Q_{p}$. In particular, if we have an element $\alpha \in \ker{ (\overline{V})}$, we can lift this to a vector $\tilde{\a} \in \Z_{p} ^{n}$, such that $\nu_{p} (\a_{i}) = \nu_{p} (\tilde{a}_{i})$, and thus
\[
V\tilde{\a} = \begin{bmatrix}
p^{n} b_{1} \\
\vdots \\
p^{n} b_{n} \\
\end{bmatrix},
\]
where the $b_{1}, \ldots, b_{n} \in \Z_{p}$. This implies

\[
\tilde{\alpha} = V^{-1} \begin{bmatrix}
p^{n} b_{1} \\
\vdots \\
p^{n} b_{n} \\
\end{bmatrix},
\]
and hence
\[
\nu_{p} (\a_{i}) = \nu_{p} (\tilde{a}_{i}) \geq n + \min_{1\leq j \leq n} \nu_{p}((V^{-1})_{i,j}).
\]
Hence, it remains to bound the valuations appearing in the inverse of the Vandermonde matrix $V$. From \cite[Lemma 3.1]{OP} we know that the coefficient of $V^{-1}$ at position $(i,j)$ is given by
\[ (-1)^{n-i} \cdot \frac{s_{n-i}(w_1,\dots,\hat{w_j},\dots,w_{n})}{\prod_{0 \le \ell \le n, \ell \neq j} (w_j - w_\ell)},\]
where $s_d(\dots)$ is the elementary symmetric polynomial of degree $d$ in $n-1$ variables (the hat meaning that this variable is omitted). As all the $w_{j}$ have valuation $1$, we find that $\nu_{p} ( s_{n-i}(w_1,\dots,\hat{w_j},\dots,w_{n}) ) \geq n-i$. As for the denominator, we have
\begin{equation}\label{vander}
\max_{1 \le j \le n} \nu_{p} \left( \prod_{ \substack{ 1 \leq \ell \leq n \\ \ell \neq j } } (w_{l} - w_{j}) \right) = \max_{1 \le j \le n} \nu_{p} \left( p^{n} \prod_{ \substack{ 1 \leq \ell \leq n \\ \ell \neq j } } \left( \frac{w_{l}}{p} - \frac{w_{j}}{p} \right) \right) \geq n - 1 + f(n),
\end{equation}
where 
\[
f(n) := \sum_{i=1}^\infty \left\lfloor \frac{n-1}{(p-1)p^{i-1}} \right\rfloor.
\]
The last inequality in \eqref{vander} is \cite[Proposition 3.2]{OP} and becomes an equality if the $w_{j}$ are chosen correctly (compare with \cite[Proof 3.3]{OP}). We note that 
\[
f(n) \leq \frac{n-1}{p-1} \sum_{i=0}^{\infty} \frac{1}{p^{i}} = (n-1)\frac{p}{(p-1)^{2}},
\] 
and thus, assuming the $w_{j}$ are chosen such that we have equality in \eqref{vander}, putting everything together we find
\[
\min_{1\leq j \leq n} \nu_{p}((V^{-1})_{i,j}) \geq n - i - \left( n - 1 + (n-1)\frac{p}{(p-1)^{2}} \right) = 1  - i - (n-1)\frac{p}{(p-1)^{2}}.
\]
We conclude that
\[
\nu_{p} (\a_{i}) \geq n + 1 - i - (n-1)\frac{p}{(p-1)^{2}} = n \cdot \left( 1 - \frac{p}{(p-1)^{2}} \right) - i + 1+  \frac{p}{(p-1)^{2}},
\]
but, as $\frac{p}{(p-1)^{2}} < 1$ for $p \geq 5$, the right hand side can be made arbitrarily high by increasing $n$.

\end{proof}
Note that the proof requires us to choose the weights correctly. To show that we can indeed do this, we have the following result.

\begin{lem}\label{valuationlem}
If $a,b \in \N$ and $p\geq 3$ prime, then we have 
\[
\nu_{p}\left( \frac{(1+p)^{a}-(1+p)^{b}}{p} \right) = \nu_{p}(a-b). 
\] 
\end{lem}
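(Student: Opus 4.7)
The plan is to reduce to the case $b=0$ and then apply the binomial theorem. Without loss of generality we may assume $a \geq b$ (the case $a=b$ is trivial, both sides being $+\infty$). Since $(1+p)^b$ is a unit in $\Z_p$, we have
\[
\nu_p\!\left(\frac{(1+p)^a - (1+p)^b}{p}\right) = \nu_p\!\left(\frac{(1+p)^{a-b}-1}{p}\right),
\]
so it suffices to show that $\nu_p\big(((1+p)^n - 1)/p\big) = \nu_p(n)$ for every $n \in \N$, where $n = a-b$.

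Next, expand via the binomial theorem:
\[
\frac{(1+p)^n - 1}{p} = \sum_{k=1}^{n} \binom{n}{k} p^{k-1} = n + \sum_{k=2}^{n} \binom{n}{k} p^{k-1}.
\]
The strategy is to show that every term with $k \geq 2$ has $p$-adic valuation strictly greater than $\nu_p(n)$, so that the leading term $n$ dictates the valuation of the sum. Using the identity $k \binom{n}{k} = n \binom{n-1}{k-1}$, one gets $\nu_p\!\left(\binom{n}{k}\right) \geq \nu_p(n) - \nu_p(k)$, hence
\[
\nu_p\!\left(\binom{n}{k} p^{k-1}\right) \geq \nu_p(n) + \bigl(k - 1 - \nu_p(k)\bigr).
\]

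The main (elementary) obstacle is then to check the estimate $\nu_p(k) \leq k - 2$ for all integers $k \geq 2$ and primes $p \geq 3$. For $k = 2$ this reads $0 \leq 0$ since $p \geq 3$; for $k \geq 3$ one has $\nu_p(k) \leq \log_p(k) \leq \log_3(k) \leq k-2$, which follows by a direct check at $k=3$ and monotonicity thereafter. With this in hand, $k - 1 - \nu_p(k) \geq 1$ for each $k \geq 2$, so every term in the tail sum has valuation at least $\nu_p(n) + 1$. Consequently the valuation of the sum equals $\nu_p(n)$, completing the proof.
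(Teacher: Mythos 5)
Your proof is correct and takes essentially the same approach as the paper: reduce to $\nu_p\big(((1+p)^n-1)/p\big)=\nu_p(n)$, expand binomially, use $k\binom{n}{k}=n\binom{n-1}{k-1}$ to bound $\nu_p\binom{n}{k}$, and invoke $\nu_p(k)\le k-2$ for $k\ge 2$ (which the paper simply asserts and you verify in a bit more detail).
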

\begin{proof}
Assume without loss of generality that $a \geq b$, then
\[
 \frac{(1+p)^{a}-(1+p)^{b}}{p} =  \frac{(1+p)^{a}(1-(1+p)^{b-a})}{p}
\]
and $\nu_{p}((1+p)^{a}) = 0$, so it suffices to show that for any $c \in \N_{\geq 1}$ we have
\[
\nu_{p}\left( (1+p)^{c} - 1 \right) = \nu_{p}(c) + 1.
\]
We have
\begin{equation}\label{binom}
(1+p)^{c} - 1 = cp + {c \choose 2 }p^{2} +  {c \choose 3 }p^{3} + \ldots + p^{c}.
\end{equation}
If $n \geq 2$, then 
\[
\nu_{p} \left( {c \choose n } \right) = \nu_{p} \left( \frac{c}{n}\right) + \nu_{p} \left( {c - 1\choose n - 1} \right) \geq \nu_{p}(c) -\nu(n),
\]
as ${c - 1\choose n - 1} $ is a positive integer. Furthermore, we have $
\nu_{p}(n) \leq n- 2$ and hence 
\[
\nu_{p} \left( {c \choose n } \right) > \nu_{p}(c) + 1 -n.
\]
This shows that if we take the valuation of the right hand side of\eqref{binom} we end up with
\[
\nu_{p}\left( (1+p)^{c} - 1 \right) = \nu_{p}(cp) = \nu_{p}(c) + 1.
\]
\end{proof}
In particular, if we let $S$ be the set containing the first $n$ natural numbers prime to $p$, then we have that 
\[
\max_{x \in S} \nu_{p} \left( \prod_{s \in S, s \neq x} (x - s) \right) = f(n),
\]
as explained in \cite[Lemma 3.1]{OP}. But now Lemma \ref{valuationlem}, shows that if we take the classical weights, $\{k_{s}: x \mapsto x^{s(p-1)} | s \in S\}$ with the corresponding $\{ w_{s} = (p+1)^{s(p-1)}-1 | s \in S \}$, then 
\[
f(n) = \max_{x \in S} \nu_{p} \left( \prod_{s \in S, s \neq x} (x - s) \right) = \max_{x \in S} \nu_{p} \left( \prod_{s \in S, s \neq x } \left( \frac{w_{x}}{p} - \frac{w_{s}}{p} \right) \right) \,
\]
where the second equality follows from Lemma \ref{valuationlem}, as it implies that for all $x,s \in S$ we have the equality $\nu_{p} (x - s ) = \nu_{p}((w_{x} - w_{s})/p)$. Thus Lemma \ref{kernellemma} applies to this choice of weights. In particular, if Algorithm 2 returns "inconclusive", one can increase the number of weights given as the input as above, which will increase the values of the $\gamma_{j}$. Hence, as long as the $b_{i,j}$ are non-zero, there will be a number of weights such that $\gamma_{j} > b_{i,j}$ and thus we can determine $\nu(b_{i,j})$ exactly. However, if for some $i$ and $j$ we have that $b_{i,j} = 0$, then we cannot use this algorithm to determine $v(b_{i,j})$ (as it will be infinite). Computations so far seem to suggest that if $i,j \neq 0$ and $\mathcal{B}_{i} \neq \emptyset$, then $b_{i,j} \neq 0$, but a proof does not seem available at the time, nor are we sure to even expect that this is the case. However, if $j = 0$ we do have the following result.
\begin{prop}\label{jzero}
Let the $b_{i,j}$ be as in Theorem A. Then $b_{0,0} = 1$ and $b_{i,0} = 0$ for $i>0$.
\end{prop}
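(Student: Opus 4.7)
The plan is to specialize the formal Katz expansion of $\Es_{\k}/V(\Es_{\k})$ at $w = 0$ and invoke uniqueness of the Katz expansion. By definition, $b_{i,0}$ is the constant term in $w$ of $\b_i(w) = \sum_{j \ge 0} b_{i,j} w^j$, i.e.\ $b_{i,0} = \b_i(0)$. Since $E/VE \in \Z_p[[q,w]]$, one may substitute $w = 0$ in the identity $E/VE = \sum_{i \ge 0} \b_i(w)/E_{p-1}^i$ of Theorem~\ref{thm:main_B}(a), obtaining
$$
\sum_{i \ge 0} \frac{b_{i,0}}{E_{p-1}^i} \;=\; (E/VE)\bigr|_{w=0} \qquad \text{in } \Z_p[[q]].
$$
Once it is established that $(E/VE)|_{w=0} = 1$, the right-hand side above is tautologically the Katz expansion of the constant form $1 \in B_0(\Z_p) = \Z_p$, and uniqueness of the Katz expansion forces $b_{0,0} = 1$ and $b_{i,0} = 0$ for all $i > 0$.

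To compute $(E/VE)|_{w=0}$, my plan is to approach $w = 0$ through a sequence of classical weights $\k_n := (p-1)p^n \in \B \setminus \{1\}$. The computation carried out in the proof of Lemma~\ref{valuationlem} gives $\nu_p(w_{\k_n}) = \nu_p((1+p)^{\k_n} - 1) = \nu_p(\k_n) + 1 = n + 1$, so $w_{\k_n} \to 0$ $p$-adically. Since $E/VE$ is a formal power series in $w$ with coefficients in $\Z_p[[q]]$, each $q$-coefficient is continuous in $w$ at $w = 0$; this yields
$$
(E/VE)\bigr|_{w=0} \;=\; \lim_{n \to \infty}(E/VE)(w_{\k_n}) \;=\; \lim_{n \to \infty} \frac{\Es_{\k_n}}{V(\Es_{\k_n})}
$$
in $\Z_p[[q]]$. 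Inspecting the $q$-expansion of $\Es_\k$, every non-constant coefficient is $(2/\zeta^*(\k))$ times a finite divisor sum $\sum_{d \mid n,\, p \nmid d} \k(d)d^{-1}$ of $p$-adic absolute value at most $1$, uniformly in $\k$; so the question reduces to showing $2/\zeta^*(\k_n) \to 0$ $p$-adically.

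The crux, and the main obstacle to spell out carefully, is the classical input that the Kubota--Leopoldt $p$-adic zeta function $\zeta^*$ has a simple pole at the trivial character $\k = 1$ (the $p$-adic analogue of the pole of $\zeta(s)$ at $s = 1$), so that $2/\zeta^*(\k) \to 0$ as $\k \to 1$. Granting this classical fact, each non-constant $q$-coefficient of $\Es_{\k_n}$ tends $p$-adically to $0$; hence $\Es_{\k_n} \to 1$ in $\Z_p[[q]]$, and by continuity of $V$ also $V(\Es_{\k_n}) \to 1$, so the ratio tends to $1$. Combining the steps yields $(E/VE)|_{w=0} = 1$ and completes the proof via the uniqueness step above.
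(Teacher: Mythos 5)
Your proof is correct, and the skeleton is the same as the paper's: approach $w=0$ through classical weights with $\nu_p(w_{\k_n})\to\infty$, argue that $\Es_{\k_n}/V(\Es_{\k_n})$ is forced to $1$, and then use the isomorphism $\phi$ (uniqueness of the Katz expansion) to read off $b_{0,0}=1$ and $b_{i,0}=0$ for $i>0$. Where you genuinely diverge from the paper is in how you justify the key fact that $\Es_{\k}/V(\Es_{\k})\to 1$. The paper's proof cites the stronger structural fact from Buzzard--Calegari that $\Es_\k\in 1+wq\Z_p[[w,q]]$, then uses Lemma~\ref{lemminverse} to get the same for $V(\Es_\k)^{-1}$ and hence for the quotient, and finishes with a clean mod-$p^n$ congruence. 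You instead take as input only the weaker statement $E/VE\in\Z_p[[q,w]]$ (used for the continuity of the $q$-coefficients at $w=0$), and derive the vanishing of the nonconstant $q$-coefficients of $\Es_{\k_n}$ at $w=0$ from the analytic fact that the Kubota--Leopoldt $p$-adic zeta function has a pole at the trivial character, so that $2/\zeta^*(\k_n)\to 0$. This is a legitimate alternative; note, however, that it is not really more elementary — the Buzzard--Calegari statement $\Es_\k\in 1+wq\Z_p[[w,q]]$ is precisely the assertion that the nonconstant $q$-coefficients of $\Es_\k$, which are multiples of $2/\zeta^*(\k)$, vanish at $w=0$, so both routes rest on the same underlying analytic input. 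One small point to tighten if you write this up: to conclude $2/\zeta^*(\k_n)\to 0$ you need not just that $\zeta^*$ has a pole at the trivial character but that the residue there is nonzero (so that $1/\zeta^*$ actually tends to $0$ rather than merely staying bounded); this is the standard nonvanishing of the residue of $\zeta_p$ at $s=1$, and is worth stating explicitly since you have flagged this step as the crux. The paper's route avoids having to invoke this separately, which is why it is a bit slicker, but your argument does go through.
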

To prove this, we need the following lemma.
\begin{lem}\label{lemminverse}
Let $R$ be a commutative ring and $f \in 1 + xyR[[x,y]]$. Then $f$ is invertible and $f^{-1} \in 1 + xyR[[x,y]]$.
\end{lem}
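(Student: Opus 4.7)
The plan is to use the formal geometric series. Write $f = 1 - u$ where $u = -xy \cdot g$ with $g \in R[[x,y]]$, so $u \in xyR[[x,y]]$. I would then define
\[
f^{-1} := \sum_{n=0}^{\infty} u^n = 1 + u + u^2 + \cdots
\]
as a formal expression and verify that this makes sense in $R[[x,y]]$.

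The key observation is that $u^n \in (xy)^n R[[x,y]]$, so every monomial $x^a y^b$ appearing in $u^n$ satisfies $a \ge n$ and $b \ge n$. Hence, for any fixed $(a,b)$, only the finitely many terms with $n \le \min(a,b)$ contribute to the coefficient of $x^a y^b$ in $\sum_{n\ge 0} u^n$. This shows that $f^{-1}$ is a well-defined element of $R[[x,y]]$.

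Next I would check algebraically that $f \cdot f^{-1} = 1$: since $(1-u)\sum_{n=0}^{N} u^n = 1 - u^{N+1}$, and $u^{N+1} \in (xy)^{N+1}R[[x,y]]$ contributes nothing to coefficients of total degree less than $2(N+1)$, taking $N \to \infty$ coefficient-by-coefficient yields $f\cdot f^{-1} = 1$ in $R[[x,y]]$. Finally, since the $n=0$ term of the series is $1$ and every term with $n \ge 1$ lies in $xyR[[x,y]]$, we have $f^{-1} - 1 \in xyR[[x,y]]$, i.e.\ $f^{-1} \in 1 + xyR[[x,y]]$, as required.

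There is no real obstacle here; the only subtlety worth stating carefully is the convergence of the geometric series in the $(xy)$-adic sense (not merely the $(x,y)$-adic sense), which is exactly what guarantees that the inverse again lies in $1+xyR[[x,y]]$ rather than just in $1+(x,y)R[[x,y]]$.
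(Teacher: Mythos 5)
Your argument is correct, but it takes a different route from the paper. You invert $f = 1 - u$ via the geometric series $\sum_{n\ge 0} u^n$, observing that $u \in xyR[[x,y]]$ forces $u^n \in (xy)^n R[[x,y]]$, so the sum is $(xy)$-adically convergent and each coefficient is a finite sum; the membership $f^{-1} \in 1 + xyR[[x,y]]$ then drops out because every term with $n\ge 1$ already lies in $xyR[[x,y]]$. The paper instead regards $f$ as an element of $R[[x]][[y]]$ with constant term (in $y$) equal to $1$, writes down the standard recursion $b_k = -\sum_{i=1}^k a_i b_{k-i}$ for the coefficients of the inverse, and shows by induction that $b_k(x) \in xR[[x]]$ because each $a_i(x)$ does. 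The two proofs are of comparable length and both are entirely elementary; your version makes the topological (adic) convergence explicit and keeps the two variables on an equal footing, whereas the paper's recursion privileges one variable and keeps everything purely algebraic, which is perhaps closer in spirit to the way the lemma is then invoked (the reduction modulo $p^n$ argument treats $w$ as the distinguished variable). Either proof could be substituted for the other without affecting anything downstream.
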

\begin{proof}
If we consider $f$ as a power series in the variable $y$ and with coefficients power series in $x$, say $f = 1 + a_1(x)y + a_2(x)y^{2}+ \ldots$, then the inverse is given by $f^{-1} = 1 + b_1(x)y + b_2(x)y^{2}+ \ldots $, where $b_{k}(x) = -\sum_{i=1}^{k} a_{i}b_{k-i}$, where we set $b_{0} = 1$. Induction then shows that $b_{i}(x) \in xR[[x]]$ and hence $f^{-1} \in 1 + xyR[[x,y]]$.
\end{proof}

Now we can prove Proposition \ref{jzero}.
\begin{proof}

We first note that the isomorphism $\phi$ descends to an isomorphism 
\[
\tilde{\phi} : \prod_{i\ge 0} B_i(\Z/p^{n}\Z) \to \left( \Z / p^{n}\Z \right)[[q]],
\]
for any $n \in \N$, which follows from the same argument that $\phi$ is an isomorphism. Now, let $\kappa$ be an integral weight such that $w := w_{\kappa}$ satisfies $\nu(w) \geq n$ (one can pick for example the integral weight corresponding to $k = (p-1)p^{n+1}$). Then, see \cite{BuzCal}, we know that $\Es_{\kappa} \in 1 +wq\Z_{p}[[w,q]]$ and $V(\Es_{\kappa}) \in 1 +wq\Z_{p}[[w,q]]$ and Lemma \ref{lemminverse} then implies that $\frac{\Es_\k}{V(\Es_\k)}  \in 1 +wq\Z_{p}[[w,q]] $.
Looking at the Katz expansion 
\[
\frac{\Es_\k}{V(\Es_\k)} = \sum_{i=0}^{\infty} \frac{\sum_{j=0}^{\infty} b_{ij}w^{j}}{E_{p-1}^{i}},
\]
and reducing modulo $p^{n}$ we get 
\[
1 \equiv \sum_{i=0}^{\infty}  \frac{b_{i,0}}{E_{p-1}^{i}} \mod{p^{n}},
\]
as $\nu(w) \geq n$. This is a congruence of power series. Note that for a fixed $N$, there are only finitely many $i$ such that $b_{i,0}$ has a non-zero coefficient of $q^{N}$ and hence the infinite sum makes sense. As $\phi_{n}$ is an isomorphism, we get that $b_{0,0} \equiv  1 \mod {p^{n}} $ and $b_{i,0} \equiv 0 \mod{p^{n}}$ for $i>0$, since $b_{0,0} \in \Z_{p}$ and $b_{i,0}$ has no constant term for $i>0$. But this holds for any $n$ and we conclude.
\end{proof}

\section{Observations}
In this section we will present data, which is obtained using Algorithm 2. As already stated in the introduction, we define  $\delta_{p} \in \R$ as follows:
\[
\delta_{p} := \text{inf} \left \{ \frac{ \nu_{p}(b_{ij}) + j}{i} \middle| i \in \Z_{>0}, j \in \Z_{\geq 0} \right \}.
\]
Our main objective will be to provide an upper bound for the value $\delta_{p}$ obtained from computations and use this to formulate a precise conjecture on a lower bound for $\delta_{p}$. As we can only compute finitely many values for $\nu(b_{i,j})$, we can only obtain an upper bound. From Theorem \ref{thm:main_B} we know that there exists a constant $c_{p}$ such that $\delta_{p} \geq c_{p}$. The main intent for Algorithm 2 is to see whether we expect $c_{p} = \delta_{p}$ or whether we expect $c_{p}$ to be strictly smaller than $\delta_{p}$. To be more precise, assume we have a set of tuples $(i,j) \in \N_{\geq 1} \times \N_{\geq 0}$, say $S$, for which we have computed $\nu(b_{i,j})$ (so $S$ is necessarily finite), then an upper bound for $\delta_{p}$ can be given by computing
\begin{equation}\label{mineq}
d'_{p} := \min_{(i,j) \in S}\left\{  \frac{\nu(b_{i,j})+j}{i} \right\}.
\end{equation}

\begin{remark}
Note that for a fixed $i$ we do not need to know the values $\nu(b_{i,j})$ for all $j$. More precisely, if we have already found an upper bound for $\delta_{p}$, say $d'$, then we only need to know the values  $\nu(b_{i,j})$ for $j \leq d'i$. Indeed, as $\nu(b_{i,j}) \geq 0$ (since the $b_{i,j}$ have coefficients in $\Z_{p}$), we have that if  $j \geq d'i$, then $\frac{ \nu(b_{i,j})+j}{i} \geq d'$ and hence computing $\nu(b_{i,j})$ for larger values of $j$ will have no impact on an upper bound of $d_{p}$.
\end{remark}

We ran Algorithm 2 for the primes $p \in \{5,7,11,13,17,37 \}$ for different ranges of $i$. Once we find a new upper bound $d'_{p}$, we input enough weights so that we get exact values of $\nu(b_{i,j})$ for $j=1,\ldots, \lceil d'_{p}i \rceil $, as the values $\nu(b_{ij})$ for $j \geq d_{p}'i$ will not influence the minimum as in \eqref{mineq}, as explained in the remark above.
In the following table we present the values for the upper bounds we have found. We include till what value of $i$ we computed $\nu(b_{i,j})$ and we also include the first value of $i$ for which the value $d_{p}'$ is attained.

\begin{center}
\begin{tabular}{ |c|c|c|c| } 
 \hline
 prime & $i\leq $ & $d'_{p}$ & attained at $i=$ \\  \hline
 5 & 599 & 2/15 & 30\\ \hline
 7 & 502 & 3/28 & 56\\  \hline
 11 &312&  5/66 & 132\\ \hline
 13 & 288 & 6/91 & 182 \\ \hline
 17 & 248 & 1/18 & 18 \\ \hline
 37 & 130 & 1/38 & 38 \\ \hline
\end{tabular}
\end{center}
\medskip
For a more visual representation, we also include the following plot for $p=11$.

\begin{figure}[h]
\begin{center}

\begin{subfigure}{.4\textwidth}
  \centering
  \includegraphics[scale = 0.6]{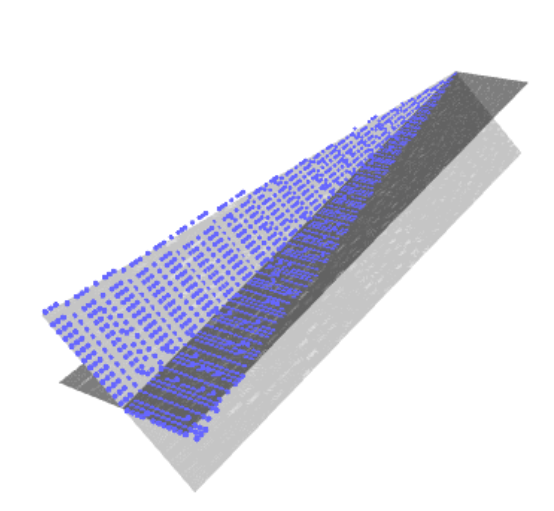}
  \label{fig:sub1}
\end{subfigure}%
\begin{subfigure}{.6\textwidth}
  \centering
  \includegraphics[scale = 0.6]{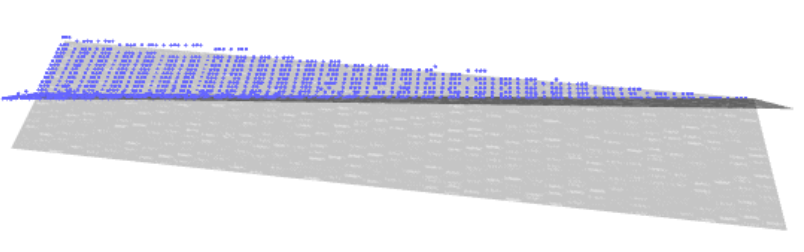}
  \label{fig:sub2}
\end{subfigure}
\caption{The values $(i,j, \nu(b_{i,j}))$ for $1 \leq i \leq 172$ and $1 \leq j \leq 5/66 \cdot i$ and $p=11$. The black plane is given by $z=0$ and the grey plane is given by $5/66 \cdot i - j =z$. }
\label{fig:test}

\end{center}
\end{figure}
Note that there are some points $(i,j)$ missing for which Algorithm \ref{alg2} returned inconclusive. In particular, this happens for certain values for which $\mathcal{B}_{i} = \emptyset$, e.g. for $i= 7$, and hence $b_{i,j}=0$ for these $i$.\\

Based on this data we formulated Conjecture \ref{conjecture}.
We would like to make a few remarks about this. First, we note that we only found an upper bound in agreement with our conjectured value of $d_{p}$ in the cases $p=5,7,11,13$. In the cases $p=17$ and $p=37$ we find a value $1/(p+1)$ as an upper bound instead, strictly larger than our conjectured value for $d_{p}$. However, due to the nature of how we compute an upper bound, see \eqref{mineq}, we can only get a value $i$ in the denominator if we have computed $\nu(b_{i,j})$ for a multiple of $i$. In particular, for $p=17$, we expect to find $d_{p}' = 8/153$, which means that we can only find this value if we compute $\nu(b_{i,j})$ for $i$ a multiple of $153$. Similarly, for $p=13$, we need to compute $\nu(b_{i,j})$ where $i$ is a multiple of $703$. As the computation time increases as we increase $i$, we have not been able to compute  $\nu(b_{i,j})$ for these values of $i$.\\

Secondly, assuming that $p=5$ or $p=7$, and $k\in \N$ divisible by $p-1$, then we know that 
\[
\frac{ \Es_{k} } { V(\Es_{k}) } \in M_{0} \left( \Z_{p}, \geq \frac{p-1}{p(p+1)} \right) ,
\]
see Proposition 4.2 in \cite{OP}. In particular, we see that for these primes, our conjectured value for $d_{p}$ precisely agrees with the overconvergence rate of Eisenstein series with classical weights. However, the proof of this is highly specific for $p=5,7$, and for larger primes than this, only strictly higher overconvergent rates are proven. Furthermore, we are not sure if it is possible to theoretically prove Conjecture \ref{conjecture} using the overconvergent rates for the Eisenstein series with only classical weights.

\bibliography{ocalgorithmsbib} 
\end{document}